\newtheorem*{thm*}{Theorem} \newtheorem{thm}{Theorem}[section]
 \newtheorem{lem}[thm]{Lemma}
\newtheorem{prop}[thm]{Proposition} \theoremstyle{definition}
\newtheorem{defn}[thm]{Definition} \theoremstyle{remark}
\numberwithin{equation}{section}
\subjclass{Primary 11K06; Secondary 11K16, 11J71.}
\keywords{Normal numbers, Uniformly distributed sequences}
\title[Sequences generated by normal numbers]{Distribution of
  sequences generated by certain simply-constructed normal numbers}
\author{Demi Allen}
\address[D.~Allen]{Department of Mathematics\\
  University of York\\
  York, UK \\
  YO10 5DD} \email[D.~Allen]{dda505@york.ac.uk}
\author{Sky Brewer}
\address[S.~Brewer]{Department of Mathematics\\
  University of York\\
  York, UK \\
  YO10 5DD}
\email[S.~Brewer]{jaco.brewer@gmail.com}
\date{October 30, 2015}
\begin{document}
\begin{abstract}
  \noindent In 1949 Wall showed that $x = 0.d_1d_2d_3 \dots$ is normal if
  and only if $(0.d_nd_{n+1}d_{n+2} \dots)_n$ is a uniformly
  distributed sequence. In this article, we consider sequences which
  are slight variants on this. In particular, we show that certain
  normal numbers of the form $0.a_na_{n+1}a_{n+2} \dots$, where $a_n$
  is a sequence of positive integers, give
  rise in a rather natural way to sequences which are not uniformly
  distributed. Motivated by a result of Davenport and Erd\H{o}s we
  also show that for a non-constant integer polynomial the sequence
  $(0.f(n)f(n+1)f(n+2) \dots)_n$ is not uniformly distributed.
\end{abstract}

\maketitle
\section{Introduction and Statement of Results}

A number, $\alpha$, is said to be normal to the base $b$ if the
frequencies of strings of digits in the $b$-adic expansion are as
would be expected if the digits were completely random. In his 1933
paper, \cite{Champernowneref}, Champernowne exhibited a selection of
numbers normal to the base $10$ with simple constructions. Most
notable of these was the so-called Champernowne's number, namely the
number $0.1234567891011121314\dots$ constructed by concatenating all
of the natural numbers in (ascending) order after the decimal point -
throughout we will denote this number by $\theta$. In 1949 it was
proved by Wall in his PhD thesis \cite{wall1949} that a real number
$\alpha$ is normal to the base $b$ if and only if the sequence
$(b^n \alpha)_n$ is uniformly distributed modulo 1. Thus we know that
the sequence $(10^n \theta)_n$ is uniformly distributed modulo $1$. A
natural question therefore is: what about the sequence
$(x_n)_n = 0.(n)(n+1)(n+2)(n+3)\dots$ where the $n$th term is
essentially constructed by taking $\theta$ but starting from the
natural number $n$ after the decimal point? For example the 20th term
of the sequence $\{ 10^n \theta\}$ would be $0.516171819 \dots$
whereas the 20th term of the sequence which we are now concerned with
is $x_{20} = 0.202122232425 \dots$. We ask the following, is this
sequence uniformly distributed modulo $1$ over a suitable subinterval
of the unit interval?

In this paper we answer this question, in the negative, and consider
various related questions. In particular, Davenport and Erd\H os
showed in \cite{DavenportErdosref} that: given a polynomial
$p:\mathbb N \rightarrow \mathbb N$ of degree $\geq 1$ the number
$0.f(1)f(2)f(3) \dots$ is normal. We consider also the distribution of
the sequence $x_n = 0.f(n)f(n+1)f(n+2)...$.

Before stating our first result we introduce some necessary
terminology and notation which will be used throughout.

For positive real numbers $x$ we will denote by $\lfloor x \rfloor$
the integer part of $x$ and by $\{x\}$ the fractional part of $x$. For
a sequence of real numbers $(x_n)_n$ and $E \subseteq [0,1]$ we will
denote by $A(E;N;(x_n)_n)$ the number of $x_n$ satisfying both
$\{x_n\} \in E$ and $1 \leq n \leq N$.

Returning to the sequence $(x_n)_n$ corresponding to Champernowne's
number defined above we note that it has no values in the interval
$[0,0.1]$ but that it is dense in the interval $[0.1,1)$. So we
consider uniform distribution over such an interval using the
following definition, which is based on Definition 1.1 given in
\cite[Chapter 1]{KUIPERS1974}.

\begin{defn} \label{u.d. def} We will say that a sequence of real
  numbers $(x_n)_n$ is \itshape uniformly distributed modulo 1 over
  $[\alpha, \beta) \subseteq [0,1)$ \normalfont, which we shall
  henceforth abbreviate to u.d. mod $1$ over $[\alpha, \beta)$, if for
  any pair of real numbers $\alpha \leq a < b \leq \beta$ we have
  \begin{align}
    \lim_{N \to \infty}{\frac{A([a,b); N; (x_n)_n)}{N}} &
                                                          =\frac{b-a}{\beta- \alpha}.
                                                          \label{u.d. mod 1 condition}
  \end{align}
\end{defn}

Our first result is inspired by the normality of Champernowne's number and Wall's result.

\begin{thm} \label{Champernowne number theorem} The sequence $(x_n)_n$
  of real numbers defined by $x_n = 0.(n)(n+1)(n+2)(n+3)\dots$, where
  the $n$th term is formed by concatenating all of the natural numbers
  in order from $n$ onwards, is not u.d. mod $1$ over $[0.1, 1)$.
\end{thm}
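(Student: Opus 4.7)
The plan is to exhibit a single subinterval $[a, b) \subseteq [0.1, 1)$ and a subsequence $N_K \to \infty$ of indices along which the counting ratio $A([a,b); N_K; (x_n)_n) / N_K$ fails to converge to the value $(b-a)/(1-0.1)$ demanded by Definition \ref{u.d. def}. Natural candidates for $[a,b)$ are subintervals aligned with the first decimal digit, because the first digit of $x_n$ is completely determined by $n$: indeed, if $n$ has $k$ digits and leading digit $d \in \{1, \ldots, 9\}$, then concatenation gives $x_n \in [d/10, (d+1)/10) \subseteq [0.1, 1)$. So $x_n \in [0.5, 1)$ if and only if the leading digit of $n$ lies in $\{5, 6, 7, 8, 9\}$, which recasts the problem as a purely elementary counting question.

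Next I would choose the subsequence $N_K = 5 \cdot 10^{K-1}$, which is specifically tuned to ``cut off'' the $K$-digit integers just before the leading digits $5,6,7,8,9$ appear. Among $k$-digit integers with $k \leq K-1$, exactly $5 \cdot 10^{k-1}$ have leading digit $\geq 5$; among $K$-digit integers with $n \leq N_K$, only $n = 5 \cdot 10^{K-1}$ qualifies. Summing gives
\[
A([0.5, 1); N_K; (x_n)_n) \;=\; \sum_{k=1}^{K-1} 5 \cdot 10^{k-1} + 1 \;=\; \frac{5(10^{K-1}-1)}{9} + 1,
\]
so that $A([0.5, 1); N_K; (x_n)_n)/N_K \to 1/9$ as $K \to \infty$. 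If the sequence were u.d. mod $1$ over $[0.1, 1)$, the same limit would have to equal $(1-0.5)/(1-0.1) = 5/9$. The contradiction $1/9 \neq 5/9$ then establishes the theorem.

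There is no serious obstacle here; the proof is a clean counting argument. The only ``idea'' required is the choice of cutoff $N_K = 5 \cdot 10^{K-1}$: because a constant positive fraction of the integers up to $N_K$ are $K$-digit numbers with leading digit in $\{1,2,3,4\}$, the $x_n$'s pile up in $[0.1, 0.5)$, so that the density of $x_n$'s landing in $[0.5, 1)$ is depressed well below what uniform distribution predicts. One should briefly verify that $x_n$ is genuinely in $[d/10, (d+1)/10)$ (and not equal to $(d+1)/10$), which follows because the digit expansion of $x_n$, produced by concatenating $n, n+1, n+2, \ldots$, is not eventually all $9$'s (e.g., multiples of $10$ contribute $0$'s). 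With that, the argument outlined above is complete.
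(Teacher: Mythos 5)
Your proof is correct and follows essentially the same strategy as the paper's: both exploit the fact that the first decimal digit of $x_n$ equals the leading digit of $n$, and both evaluate the counting function along a subsequence of cutoffs tuned to the decimal length of $n$. The only difference is cosmetic --- the paper exhibits an \emph{excess} of terms in $[0.1,0.2)$ at $N = 2\times 10^J$ (ratio at least $1/2$ versus the predicted $1/9$), whereas you exhibit a \emph{deficit} in $[0.5,1)$ at $N = 5\times 10^{K-1}$ (ratio tending to $1/9$ versus the predicted $5/9$).
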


One can prove this quite easily by observing that, for any natural
number $J$, upon reaching the term $x_{10^J}$ the next $10^J$ terms
will begin with a 1 immediately after the decimal point. That is; for
each $J \in \mathbb{N}$ at least half of the terms up to the term
$x_{2 \times 10^J}$ begin with a first decimal digit 1. More
precisely, for $J \in \mathbb{N}$;
\[
  \frac{A([0.1, 0.2); 2 \times 10^J; (x_n)_n)}{2 \times 10^J} =
  \frac{\#\{ x_n \in [0.1,0.2): n \leq 2 \times 10^J\} }{2 \times
    10^J} \geq \frac{1}{2}.
\]
Comparing this with Definition \ref{u.d. def} the result of Theorem
\ref{Champernowne number theorem} follows. The point is that there
are too many terms of the sequence in the interval $[0.1, 0.2)$
infinitely often. \\

As well as proving the normality of $\theta$ in \cite{Champernowneref}
Champernowne also highlights a few other very natural constructions of
decimals which turn out to be normal - for example the number
$0\cdot46891012141516182021\dots$ formed by concatenating all of the
composite numbers in ascending order. The motivation for our next
result is one such construction considered by Champernowne in
\cite{Champernowneref}, namely:

\begin{thm*}[Champernowne] If $k$ is
  any positive number and $a_n$ denotes the integral part of $kn$,
  then the decimal $0\cdot a_1a_2\dots a_n \dots$ is normal in the
  scale of ten.
\end{thm*}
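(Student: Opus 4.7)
My plan is to adapt Champernowne's original combinatorial counting argument for the normality of $\theta = 0.12345\ldots$. The one new ingredient is the elementary observation that each integer $m \geq 0$ appears in the sequence $(a_n)_n$ with multiplicity either $\lfloor 1/k \rfloor$ or $\lceil 1/k \rceil$, since $\lfloor kn \rfloor = m$ precisely when $n \in [m/k, (m+1)/k)$, an interval of length $1/k$.

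Fix a digit block $B$ of length $\ell$; I wish to show that the frequency of $B$ as a contiguous substring in $a_1 a_2 a_3 \ldots$ tends to $10^{-\ell}$. Let $M_d$ denote the number of indices $n$ for which $a_n$ has exactly $d$ base-$10$ digits, and set $N_D = \sum_{d=1}^D d \cdot M_d$, the total number of digits contributed by the $a_n$ with at most $D$ digits. By the multiplicity count, $M_d = \tfrac{9 \cdot 10^{d-1}}{k} + O(1)$, so $N_D \sim \tfrac{D \cdot 10^D}{k}$. It suffices to prove the frequency statement along the subsequence $N = N_D$; intermediate $N$ follow by sandwiching, since $N_D / N_{D-1} \to 10$ and the excess from any partial $a_n$ is $O(D)$.

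The count of occurrences of $B$ within the first $N_D$ digits splits into those lying wholly inside a single $a_n$ and those straddling a boundary between consecutive terms $a_n, a_{n+1}$. For the interior count: among the $9 \cdot 10^{d-1}$ integers with exactly $d$ digits, the number containing $B$ at each of the $d - \ell + 1$ internal starting positions is $10^{d-\ell}$, up to an $O(10^{d-\ell-1})$ leading-digit correction. Weighting each $m \in [10^{d-1}, 10^d)$ by its multiplicity ($\approx 1/k$) and summing over $d \leq D$ yields the main term
\[
\frac{1}{k}\sum_{d=1}^{D}(d-\ell+1) \cdot 9 \cdot 10^{d-1} \cdot 10^{-\ell} \;\sim\; \frac{D \cdot 10^D}{k \cdot 10^\ell} \;\sim\; \frac{N_D}{10^\ell}.
\]
For the boundary count: each pair $(a_n, a_{n+1})$ contributes at most $\ell - 1$ additional starting positions for $B$, so the total boundary contribution is bounded by $(\ell - 1)\sum_{d \leq D} M_d = O(10^D/k) = O(N_D / D) = o(N_D)$, which is negligible.

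The main obstacle I anticipate is careful bookkeeping of several sources of additive error: (a) the leading-digit restriction preventing the first digit of a $d$-digit integer from being $0$; (b) the non-uniform multiplicity $\lfloor 1/k \rfloor$ versus $\lceil 1/k \rceil$ across $m \in [10^{d-1}, 10^d)$; and (c) the possibility that the indices $n$ with $a_n$ having exactly $d$ digits do not form a full $\lceil 1/k \rceil$-block at the endpoints of each range. Each of these contributes at most $O(10^d)$ per length $d$, summing to $O(10^D)$, which is a factor of $D$ smaller than the main term $N_D \sim D \cdot 10^D / k$ and is therefore absorbed in the error. Once these error estimates are in hand, the proof reduces to the clean asymptotic above, establishing normality of $0 \cdot a_1 a_2 \ldots$ to base $10$.
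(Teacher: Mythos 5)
First, a point of comparison: the paper does not actually prove this statement. It is quoted from Champernowne, and the authors merely remark that it can be verified via the Copeland--Erd\H{o}s theorem on concatenations of sufficiently dense increasing integer sequences. Your direct, Champernowne-style counting argument is therefore a genuinely different and more self-contained route, and most of its ingredients are sound: the multiplicity observation $\lfloor kn\rfloor=m$ iff $n\in[m/k,(m+1)/k)$, the estimate $M_d=9\cdot 10^{d-1}/k+O(1)$, the split into interior and straddling occurrences, and the bound $O(\ell\cdot 10^{D}/k)=o(N_D)$ on the straddling contribution are all correct. Your error sources (a)--(c) are also the right ones to flag; in particular, for $k>1$ an individual $m$ has multiplicity $0$ or $1$, so ``weight each $m$ by $\approx 1/k$'' is only valid averaged over the arithmetic-progression-of-intervals determined by fixing a block position, with an $O(1)$ error per interval --- but your $O(10^d)$ per digit-length budget does absorb this.

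The genuine gap is the sentence ``intermediate $N$ follow by sandwiching, since $N_D/N_{D-1}\to 10$.'' Writing $c(N)$ for the number of occurrences of $B$ in the first $N$ digits, monotonicity of $c$ together with $c(N_D)/N_D\to 10^{-\ell}$ and $N_D/N_{D-1}\to 10$ gives, for $N_{D-1}\le N\le N_D$, only
\[
\frac{c(N_{D-1})}{N_D}\;\le\;\frac{c(N)}{N}\;\le\;\frac{c(N_D)}{N_{D-1}},
\]
i.e.\ $\liminf_N c(N)/N\ge 10^{-\ell-1}$ and $\limsup_N c(N)/N\le 10^{1-\ell}$. This pins the frequency down only to within a factor of $10$ and does not yield convergence to $10^{-\ell}$; the remark that ``the excess from any partial $a_n$ is $O(D)$'' addresses a different and much smaller error (a single truncated term), not the $\sim\frac{9}{10}N_D$ digits that can lie between consecutive checkpoints. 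There are two standard repairs. Either extend your interior count to an arbitrary cutoff, i.e.\ estimate the number of occurrences of $B$ among the $d$-digit terms $a_n$ with $a_n<m$ for every $m\in[10^{d-1},10^d)$ --- your interval decomposition goes through for such truncated ranges with the same error budget, and this is what Champernowne's original argument does. Or invoke the criterion (of Piatetski-Shapiro type) that a real number is normal to base $10$ as soon as every block $B$ satisfies $\limsup_N c_B(N)/N\le C\cdot 10^{-|B|}$ for a single constant $C$ independent of $B$; your factor-of-$10$ bound is exactly of this form with $C=10$. Either patch completes the proof, but as written the limit for general $N$ has not been established.
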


We remark that Champernowne does not provide an explicit proof of this
statement (or indeed of the normality of
$0\cdot46891012141516182021\dots$) in \cite{Champernowneref}. However,
this can be verified by Copeland and Erd\H{o}s' result in
\cite{CopelandErdosref}. So, taking $k \in \mathbb{N}$ in Champernowne's Theorem stated above we obtain the normal number
$0 \cdot k(2k)(3k)(4k) \dots$. However, along the same lines as
Theorem \ref{Champernowne number theorem}, when we ask the analogous
question here to the one posed in the introduction regarding
Champernowne's number, we obtain the following result.

\begin{thm} \label{general kn theorem} Let $k \in \Nat$ be arbitrary.
  Then, the sequence $(x_n)_n$ defined by
  \[
    x_n = 0.(kn)(k(n+1))(k(n+2))(k(n+3))\dots
  \]
  is not u.d. mod 1 over $[0.1,1)$.
\end{thm}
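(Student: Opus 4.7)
The plan is to adapt the argument used for Theorem~\ref{Champernowne number theorem}: I will exhibit a subsequence of indices $N_J \to \infty$ along which the proportion of $x_n$ with $n \leq N_J$ lying in $[0.1, 0.2)$ is bounded below by (asymptotically) $1/2$, which is incompatible with being u.d.\ mod $1$ over $[0.1, 1)$, since the latter would force that limit to equal $(0.2-0.1)/(1-0.1) = 1/9$.

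The key observation is that $\{x_n\} \in [0.1, 0.2)$ whenever the integer $kn$ has leading decimal digit $1$, i.e.\ whenever $kn \in [10^J, 2 \cdot 10^J)$ for some $J \in \mathbb{N}$. In this case the first decimal digit of $x_n$ is $1$, and since the subsequent digits come from the concatenation of $k(n+1), k(n+2), \dots$ and cannot form an infinite tail of $9$'s (the integers $k(n+i)$ are not all strings of nines), we in fact have $\{x_n\} < 0.2$ strictly. Now for each sufficiently large $J$, set $N_J := \lfloor 2 \cdot 10^J / k \rfloor$: every integer $n$ with $\lceil 10^J/k \rceil \leq n \leq N_J$ satisfies $kn \in [10^J, 2 \cdot 10^J)$, so the number of such $n$ (at least $10^J/k - 1$) yields
\[
\frac{A([0.1, 0.2); N_J; (x_n)_n)}{N_J} \;\geq\; \frac{10^J/k - 1}{2 \cdot 10^J/k} \;\longrightarrow\; \frac{1}{2} \quad \text{as } J \to \infty,
\]
contradicting \eqref{u.d. mod 1 condition} applied with $[a,b) = [0.1, 0.2)$ and $[\alpha, \beta) = [0.1, 1)$.

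I expect no substantive obstacle: the argument is a direct translation of Theorem~\ref{Champernowne number theorem}, the only change being that the bunching interval $[10^J, 2 \cdot 10^J)$ of integer values is pulled back through multiplication by $k$ to the interval $[10^J/k,\, 2 \cdot 10^J/k)$ of indices. The only mildly delicate point is the strict inequality $\{x_n\} < 0.2$, which holds because the concatenation of $k(n+1), k(n+2),\dots$ cannot terminate in an infinite string of $9$'s.
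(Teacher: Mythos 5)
Your argument is correct, and it proves the theorem; it is, however, a simpler and slightly coarser count than the one the paper actually runs. You and the paper both evaluate the counting function along the same subsequence $N_J = \lfloor 2\times 10^J/k\rfloor$, but you only count the indices $n$ in the top block $10^J \le kn < 2\times 10^J$, which gives the lower bound $\tfrac{1}{2}$ for the limsup of $A([0.1,0.2);N;(x_n)_n)/N$; the paper's Lemma \ref{A estimate lemma} counts the leading-digit-one indices from \emph{every} block $10^j \le kn < 2\times 10^j$, $0\le j\le J$, and summing the resulting geometric series yields the sharper constant $\tfrac{5}{9}$. Since both $\tfrac12$ and $\tfrac59$ exceed the required density $\tfrac19$, your bound suffices; what the paper's finer decomposition buys is the exact asymptotic count (not merely a lower bound) and, more importantly, the template that is then generalised in Lemma \ref{Davenport-Erdos A lemma} to handle polynomial sequences, where the analogous single-block count would still work but the block-by-block sum is what the authors reuse. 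Two small points in your write-up: your observation that $\{x_n\}<0.2$ strictly because the tail cannot be an infinite string of nines is a legitimate detail that the paper leaves implicit, and is handled correctly; and there is a harmless off-by-one at the endpoint, since when $k \mid 2\times 10^J$ the index $n=N_J$ gives $kn = 2\times 10^J$ with leading digit $2$, which costs at most one term and does not affect the limit.
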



Finally, motivated by the result of Davenport and Erd\H{o}s in
\cite{DavenportErdosref} we establish the following theorem.

\begin{thm} \label{Davenport-Erdos theorem} Let
  $f(n) = c_dn^d + c_{d-1}n^{d-1} + \dots + c_1n + c_0$ be a
  non-constant polynomial with real coefficients such that for
  $n \in \Nat$ we have $f(n) \in \Nat$. Define a sequence by
  $x_n = 0.f(n)f(n+1)f(n+2)f(n+3)\dots$. Then, the sequence $(x_n)_n$
  is not u.d. mod 1 over $[0.1,1)$.
\end{thm}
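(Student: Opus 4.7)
My plan is to follow the blueprint used for Theorems \ref{Champernowne number theorem} and \ref{general kn theorem}: I will produce a subsequence $(N_m)_m$ along which the proportion of terms $x_n$ lying in the subinterval $[0.1,0.2) \subseteq [0.1,1)$ stays bounded strictly above $(0.2-0.1)/(1-0.1) = 1/9$, in direct violation of condition \eqref{u.d. mod 1 condition}. Since $f$ is non-constant and $f(\Nat) \subseteq \Nat$, we have $d \geq 1$ and the leading coefficient $c_d > 0$, and $f(n) \geq 1$ for every $n$; hence $x_n \in [0.1,0.2)$ if and only if the leading decimal digit of $f(n)$ equals $1$, i.e.\ there exists $m \geq 0$ with $10^m \leq f(n) < 2\cdot 10^m$.

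For each $m \geq 0$ set $B_m := \{\,n \in \Nat : 10^m \leq f(n) < 2\cdot 10^m\,\}$. Using $f(n) = c_d n^d + O(n^{d-1})$, and solving $f(n) = 10^m$ and $f(n) = 2\cdot 10^m$ by a standard expansion of the inverse of $x \mapsto c_d x^d$, one checks that for $m$ sufficiently large $B_m$ is a block of consecutive integers whose endpoints differ from $(10^m/c_d)^{1/d}$ and $(2\cdot 10^m/c_d)^{1/d}$ respectively by a term of lower order in $m$, so that
\[
  |B_m| \;=\; (2^{1/d} - 1)\,(10^m/c_d)^{1/d}\,(1 + o(1)).
\]
Choosing $N_m := \lfloor (2\cdot 10^m/c_d)^{1/d}\rfloor$, every $n \in B_k$ with $k \leq m$ contributes to $A([0.1,0.2);\,N_m;\,(x_n)_n)$, and summing the resulting geometric series yields
\[
  \liminf_{m \to \infty}\;\frac{A([0.1,0.2);\,N_m;\,(x_n)_n)}{N_m}
  \;\geq\; \frac{(2^{1/d}-1)\cdot 10^{1/d}}{2^{1/d}\,(10^{1/d}-1)}.
\]
Writing $u = 2^{1/d}$ and $v = 10^{1/d}$, the right-hand side exceeds $1/9$ if and only if $u(8v+1) > 9v$; this holds for every $d \geq 1$, verified by direct calculation at $d=1$ (where $u(8v+1)=162 > 90 = 9v$) and by noting that in the limit $d \to \infty$ the two sides agree to leading order and differ at order $1/d$ by $(9\ln 2 - \ln 10)/d = \ln(51.2)/d > 0$. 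Combined with Definition \ref{u.d. def}, this gives the desired failure of uniform distribution.

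\textbf{Main obstacle.} The only genuinely delicate point is making rigorous the claim that each block $B_m$ sits uniformly close to the ideal interval $[(10^m/c_d)^{1/d},\,(2\cdot 10^m/c_d)^{1/d})$, with the error terms small enough that, after summing over $k \leq m$, dividing by $N_m$, and taking $\liminf$, the bound $(2^{1/d}-1)\cdot 10^{1/d}/(2^{1/d}(10^{1/d}-1))$ survives intact. This requires a uniform expansion of the (real) inverse of $f$ near infinity together with a harmless handling of the finitely many initial blocks where the approximation $f(n) \approx c_d n^d$ is not yet accurate; beyond this, the argument is parallel in spirit to the short density computations following Theorems \ref{Champernowne number theorem} and \ref{general kn theorem}.
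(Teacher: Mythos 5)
Your proposal is correct and follows essentially the same route as the paper: the paper's Lemma \ref{Davenport-Erdos A lemma} is precisely your count of the blocks $B_k$ summed over $k \le J$ via the inverse expansion $f^{-1}(m) = (m/c_d)^{1/d} + O(1)$ (its Lemma \ref{order lemma}), evaluated along the subsequence $f^{-1}(2\times 10^J)$, with the conclusion obtained by the same comparison of the resulting geometric-series density against $1/9$. The only cosmetic differences are that you retain the sharper constant $5^{1/d}(2^{1/d}-1)/(10^{1/d}-1)$ where the paper gives away a factor of $2$ in a crude bound, and that your endpoint check of the inequality against $1/9$ (at $d=1$ and in the limit $d\to\infty$) is neither more nor less informal than the paper's unproved assertion that its sequence $y_n$ is monotonically decreasing.
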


\section{Proofs}

We begin this section with a lemma which is the key to establishing Theorem \ref{general kn theorem}.

\begin{lem} \label{A estimate lemma} Let $a_n = kn$ and define the
  sequence $(x_n)_n$ by $x_n = 0 \cdot a_n a_{n+1}a_{n+2}\dots$ then
  \[ A\left([0.1,0.2); \left\lfloor\frac{2 \times
          10^{J}}{k}\right\rfloor; (x_n)_n\right) =
    \sum_{i=0}^{J}{\left(\left\lfloor\frac{10^{i}}{k}\right\rfloor +
        O(1)\right)}.\]
\end{lem}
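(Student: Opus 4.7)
The plan is to reduce the count to a question about the leading decimal digit of $kn$. Since $a_n = kn \geq k \geq 1$, the positive integer $a_n$ has no leading zeros, so the first digit after the decimal point of $x_n = 0.a_n a_{n+1}a_{n+2}\dots$ is exactly the leading (most significant) digit of $kn$. Consequently $\{x_n\} \in [0.1, 0.2)$ if and only if the leading digit of $kn$ equals $1$, which in turn is equivalent to the existence of an integer $i \geq 0$ with $kn \in [10^i, 2\cdot 10^i)$.

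With this reformulation in hand, I would partition the range $1 \leq n \leq N$, where $N := \lfloor 2 \cdot 10^J / k \rfloor$, according to which dyadic-style block $[10^i, 2\cdot 10^i)$ contains $kn$ (if any). Since $1 \leq n \leq N$ forces $k \leq kn \leq 2 \cdot 10^J$, the only indices $i$ that can arise are $i \in \{0, 1, \dots, J\}$. For each such $i$, the number of integers $n$ satisfying $10^i / k \leq n < 2 \cdot 10^i / k$ is, by the standard elementary estimate for lattice points in an interval, equal to $\lfloor 10^i / k \rfloor + O(1)$ with the implied constant absolute (in fact bounded by $1$).

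Summing this contribution over $i = 0, 1, \ldots, J$ immediately yields the claimed expression. Two small bookkeeping points will need handling: (i) for small $i$ with $10^i < k$ there are simply no contributing $n$ and $\lfloor 10^i/k \rfloor = 0$, so these terms are absorbed cleanly into the $O(1)$; and (ii) at the top end $i = J$ the interval $[10^J/k,\, 2 \cdot 10^J/k)$ reaches exactly the cutoff $N = \lfloor 2 \cdot 10^J / k \rfloor$, but the discrepancy between $n < 2 \cdot 10^J /k$ and $n \leq \lfloor 2 \cdot 10^J /k \rfloor$ is again at most $1$.

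The only obstacle is thus bookkeeping: packaging all the $\pm 1$ floor/ceiling adjustments and boundary coincidences into the single $O(1)$ per summand. Conceptually, the lemma is essentially a one-line observation that leading-digit-$1$ integers in $[1, 2 \cdot 10^J]$ fall into exactly $J+1$ dyadic blocks, and each block contributes $\lfloor 10^i/k \rfloor + O(1)$ multiples of $k$.
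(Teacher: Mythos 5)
Your proposal is correct and follows essentially the same route as the paper's own proof: both reduce membership of $\{x_n\}$ in $[0.1,0.2)$ to the leading digit of $a_n = kn$ being $1$, split the range into the blocks $10^i \leq kn < 2\times 10^i$ for $0 \leq i \leq J$, and count $\lfloor 10^i/k\rfloor + O(1)$ integers per block. Your treatment is if anything slightly more careful about the boundary bookkeeping than the paper's.
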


\begin{proof}[Proof of Lemma \ref{A estimate lemma}]
  We observe that
  $A\left([0.1,0.2),\lfloor \frac{2 \times 10^J}{k} \rfloor; (x_n)_n
  \right)$
  counts the number of terms $a_n$ with leading digit 1 and
  $n \leq \lfloor \frac{2 \times 10^J}{k} \rfloor$. That is, the
  number of terms $a_n$ satisfying $10^j \leq a_n < 2 \times 10^j$ for
  some $j \in \mathbb{N}$ with $0 \leq j \leq J$. So, we may write
  \begin{align*}
    A\left([0.1,0.2),\left\lfloor \frac{2 \times 10^J}{k} \right\rfloor; (x_n)_n \right) 
    &= \sum_{j=0}^{J}{\#\{n: 10^j \leq a_n < 2 \times 10^j \}} \\
    &= \sum_{j=0}^{J}{\left(\left\lfloor \frac{10^j}{k} \right\rfloor + O(1) \right)},
  \end{align*}
  which is the desired result.
\end{proof}

We may now proceed to prove Theorem \ref{general kn theorem}.

\begin{proof}[Proof of Theorem \ref{general kn theorem}]

We begin by recalling the fact that, given a bounded sequence of real numbers $(x_n)_n$, we have
  \[\limsup{n \to \infty}{x_n} \geq \limsup{k \to
      \infty}{x_{n_k}} \]
where $(x_{n_k})_{n_k}$ is any subsequence of $x_n$. 

  Using this fact in conjunction with Lemma \ref{A estimate lemma} we will show that
  \[
    \limsup{N \to \infty}{A([0.1,0.2); N; (x_n)_n) \geq \frac{5}{9}}.
  \]
  This suffices to show that the sequence $(x_n)_n$ is not u.d. mod 1
  over $[0.1, 1)$ since, if it were, we would have
  \[
    \limsup{N \to \infty}{\frac{A([0.1,0.2); N; (x_n)_n)}{N}} =
    \lim_{N \to \infty}{\frac{A([0.1,0.2); N; (x_n)_n)}{N}} =
    \frac{0.2 - 0.1}{1 - 0.1} = \frac{1}{9} < \frac{5}{9}.
  \]
  We will consider the value of $\frac{A([0.1,0.2); N; (x_n)_n)}{N}$
  evaluated at each of the points of the subsequence $(n_j)_j$ of the
  natural numbers defined by
  \[
    n_j = \left\lfloor\frac{2 \times 10^{j}}{k}\right\rfloor \text{
      for all } j> \frac{\log{k} - \log{2}}{\log{10}}
  \]
  (the condition imposed on $j$ ensures that
  $\left\lfloor\frac{2 \times 10^{j}}{k}\right\rfloor \geq 1$).
  
  By Lemma \ref{A estimate lemma} we have
  \[
    A\left([0.1,0.2); n_j; (x_n)_n\right) =
    \suml{i=0}{j}{\left(\left\lfloor\frac{10^i}{k}\right\rfloor +
        O(1)\right)}.
  \]
  From which it follows that
  \begin{align*}
    \limsup{j \to \infty}{\frac{A\left([0.1,0.2);
    n_j;(x_n)_n\right)}{n_j}} &= \limsup{j \to\infty}{\frac{
                                \suml{i=0}{j}{\left(\left\lfloor\frac{10^i}{k}\right\rfloor
                                +O(1)\right)}}{\left\lfloor\frac{2 \times
                                10^{j}}{k}\right\rfloor}} \\
                              &\geq \limsup{j \to \infty}{\frac{k}{2}
                                \frac{\suml{i=0}{j}{\frac{10^i}{k}} +O(j)}{10^{j}}} \\
                              & = \frac{k}{2} \cdot \frac{1}{k}
                                \cdot\suml{i=0}{\infty}{\frac{1}{10^i}} = \frac{5}{9},
  \end{align*}
  where the last equality is obtained by observing that
  $\suml{i=0}{\infty}{\frac{1}{10^i}}$ is a geometric series.

  Since we have now shown that there is a subsequence of the natural
  numbers $(n_j)_j$ for which
  \[
    \limsup{j \to \infty}{\frac{A\left([0.1,0.2); n_j;
          (x_n)_n\right)}{n_j}} \geq \frac{5}{9}
  \]
  it follows that
  \[
    \limsup{N \to \infty}\frac{A([0.1,0.2); N; (x_n)_n)}{N} \geq
    \frac{5}{9}.
  \]
  This concludes the proof of Theorem \ref{general kn theorem}.
\end{proof}

The main tool we use to prove Theorem \ref{Davenport-Erdos theorem} is
a generalisation of Lemma \ref{A estimate lemma}.

\begin{lem} \label{Davenport-Erdos A lemma} Let
  $f(n) = c_dn^d + c_{d-1}n^{d-1} + \dots + c_1n + c_0$ be a
  polynomial with real coefficients and of degree $\geq 1$ such that
  for $n \in \Nat$ we have $f(n) \in \Nat$. Define a sequence by
  $x_n = 0.f(n)f(n+1)f(n+2)f(n+3)\dots$. Then for $J \in \Nat$ we have
  \[ A\left([0.1,0.2); f^{-1}(2 \times 10^J); (x_n)_n\right) =
    \left(\frac{2^{1/d}-1}{c_d^{1/d}}\right)
    \sum_{i=1}^{J}{(10^{i/d})} + O(J). \]
\end{lem}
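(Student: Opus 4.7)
The plan is to follow the same template as Lemma \ref{A estimate lemma}, replacing the linear inversion $a_n = kn \leftrightarrow n = a_n/k$ with an asymptotic inversion of the polynomial $f$. First I would observe that the leading decimal digit of $x_n$ is precisely the leading digit of $f(n)$, so $x_n \in [0.1,0.2)$ if and only if $10^j \le f(n) < 2 \cdot 10^j$ for some $j \ge 0$. Since $f$ is non-constant and $f(\Nat) \subseteq \Nat$, the leading coefficient $c_d$ must be positive and $f$ is eventually strictly increasing; the bounded initial segment on which $f$ need not be monotone contributes only an $O(1)$ error. Hence
\[ A\left([0.1,0.2);\, f^{-1}(2\cdot 10^J);\, (x_n)_n\right) = \sum_{j=0}^{J} \#\{n : 10^j \le f(n) < 2\cdot 10^j\} + O(1), \]
and each inner cardinality equals $f^{-1}(2 \cdot 10^j) - f^{-1}(10^j) + O(1)$.

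The heart of the argument is the asymptotic expansion $f^{-1}(y) = (y/c_d)^{1/d} + O(1)$ as $y \to \infty$. To establish this I would set $m = (y/c_d)^{1/d}$ and expand $f(m) = y + c_{d-1} m^{d-1} + \cdots + c_0 = y + O(m^{d-1})$. Combined with $f'(\xi) = d c_d \xi^{d-1} + O(\xi^{d-2})$, which is of order $m^{d-1}$ for $\xi$ near $m$, the mean value theorem forces $f^{-1}(y) - m = O(1)$. Substituting then gives
\[ f^{-1}(2 \cdot 10^j) - f^{-1}(10^j) = \frac{2^{1/d} - 1}{c_d^{1/d}} \cdot 10^{j/d} + O(1). \]

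Summing over $0 \le j \le J$ produces the claimed formula: the $J+1$ bounded error terms collapse into a single $O(J)$, while the $j=0$ summand $\tfrac{2^{1/d}-1}{c_d^{1/d}}$ is itself $O(1)$ and may be absorbed into the error, permitting the remaining sum to be reindexed from $i=1$.

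The step I expect to be the main obstacle is securing the inversion asymptotic with a genuinely \emph{additive} $O(1)$ error, as opposed to only a multiplicative $(1+o(1))(y/c_d)^{1/d}$ bound: a multiplicative error of size $o(1)\cdot 10^{j/d}$ in each summand would overwhelm the total $O(J)$ and destroy the final asymptotic. Once the additive inversion is in hand, the rest of the argument is essentially the same bookkeeping as in Lemma \ref{A estimate lemma}.
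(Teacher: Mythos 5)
Your argument is correct and follows essentially the same route as the paper: decompose the count by the leading-digit ranges $10^j \le f(n) < 2\cdot 10^j$, write each block as $f^{-1}(2\cdot 10^j) - f^{-1}(10^j) + O(1)$, and substitute the additive asymptotic $f^{-1}(y) = (y/c_d)^{1/d} + O(1)$, which is exactly the content of the paper's Lemma \ref{order lemma}. The only (minor) divergence is that you derive this inversion via the mean value theorem while the paper substitutes $n = m^{1/d}c_d^{-1/d} + \varepsilon$ and expands with the binomial theorem; both deliver the additive $O(1)$ error that you correctly identify as the crux.
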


In order to prove Lemma \ref{Davenport-Erdos A lemma}, and
subsequently Theorem \ref{Davenport-Erdos theorem}, we require the
following observation:

\begin{lem} \label{order lemma} Let
  \begin{align}
    f(n) = c_dn^d + c_{d-1}n^{d-1} + \dots + c_1n + c_0 \label{polynomial} 
  \end{align}
  be a polynomial of degree $d$ and let $g(m)$ be its eventually
  monotonically increasing inverse (i.e. $f(g(m)) = m$). Then
  $g(m) = m^{1/d} c_d^{-1/d} + \varepsilon(m)$ and as
  $m \rightarrow \infty$ we have $\varepsilon(m) = O(1)$.
\end{lem}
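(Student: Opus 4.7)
The strategy is to invert the leading-order asymptotic $f(n) \sim c_d n^d$ and track the error. Concretely, I will first expand $f(n)^{1/d}$ to show it equals $c_d^{1/d} n + O(1)$, then substitute $n = g(m)$ and rearrange to isolate $g(m)$.

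First I would write $f(n) = c_d n^d + h(n)$ where $h(n) = c_{d-1} n^{d-1} + \dots + c_1 n + c_0$ is a polynomial of degree at most $d-1$. Hence there is a constant $C$ such that $|h(n)| \le C n^{d-1}$ for all sufficiently large $n$, which gives the factorisation
\[
f(n) = c_d n^d \left(1 + \frac{h(n)}{c_d n^d}\right) = c_d n^d \left(1 + O\!\left(\tfrac{1}{n}\right)\right).
\]
Next I would take $d$-th roots. Since $d$ is a fixed positive integer and the inner factor tends to $1$, the binomial expansion $(1+x)^{1/d} = 1 + O(x)$ for $|x|$ small applies, yielding
\[
f(n)^{1/d} = c_d^{1/d}\, n \left(1 + O\!\left(\tfrac{1}{n}\right)\right) = c_d^{1/d}\, n + O(1).
\]

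Then I would substitute $n = g(m)$, where by hypothesis $f(g(m)) = m$ and $g(m) \to \infty$ as $m \to \infty$ (since $g$ is eventually monotonically increasing and $f$ is a polynomial of positive degree). The displayed estimate becomes $m^{1/d} = c_d^{1/d}\, g(m) + O(1)$, and dividing through by $c_d^{1/d}$ gives $g(m) = c_d^{-1/d}\, m^{1/d} + O(1)$, which is exactly the claimed form with $\varepsilon(m) = O(1)$.

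There is no serious obstacle here; the content is purely a routine asymptotic inversion. The only point that warrants a little care is justifying that the $O(1/n)$ error inside the $d$-th root expansion transfers to an $O(1)$ additive error after multiplication by $n$, and that feeding $n = g(m) \to \infty$ into this estimate does not introduce any new growth in $m$. Both are immediate from the fact that $d$ is a fixed positive integer and $c_d > 0$ (forced by $f(\mathbb{N}) \subseteq \mathbb{N}$ and $\deg f \ge 1$), so $c_d^{1/d}$ is a well-defined positive constant.
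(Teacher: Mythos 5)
Your argument is correct, but it runs in the opposite direction to the paper's. The paper starts from the ansatz $n = m^{1/d}c_d^{-1/d} + \varepsilon$, substitutes it into $f$, expands by the binomial theorem, and solves the resulting equation $f(n)=m$ for $\varepsilon$; this yields not just $\varepsilon(m)=O(1)$ but an explicit asymptotic for the correction term, $\varepsilon = -\tfrac{c_{d-1}}{d\,c_d^{1-2/d}} + O(m^{-2/d})$ (in the paper's notation), i.e.\ $\varepsilon(m)$ actually converges to a constant. You instead expand $f(n)^{1/d} = c_d^{1/d}n + O(1)$ directly and then set $n=g(m)$, which is a cleaner and fully self-contained way to get exactly the $O(1)$ bound that the lemma asserts and that the later proofs actually use; the trade-off is that your route does not immediately identify the limiting value of $\varepsilon(m)$, whereas the paper's perturbative substitution does. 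Your parenthetical observations --- that $c_d>0$ is forced (here by the existence of an eventually increasing inverse, equivalently by $f(\Nat)\subseteq\Nat$ with $\deg f\ge 1$), that $g(m)\to\infty$ because $f(g(m))=m\to\infty$ while $f$ is bounded on bounded sets, and that the $O(1)$ error is uniform once $n$ is large --- are exactly the points one would want made explicit, and they are arguably handled more carefully in your version than in the paper's rather terse ``combination of the Binomial theorem and Taylor expansions.''
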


\begin{proof}
  First, we substitute $n = m^{1/d} c_d^{-1/d} + \varepsilon$ into
  (\ref{polynomial}) to get
  \[
    f(m^{1/d} c_d^{-1/d} + \varepsilon) = c_d(m^{1/d} c_d^{-1/d} +
    \varepsilon)^d + c_{d-1}(m^{1/d} c_d^{-1/d} + \varepsilon)^{d-1} +
    \dots + c_0.
  \]
  Using a combination of the Binomial theorem and Taylor
  expansions we may establish that
  \[ \varepsilon = -\frac{c_{d-1}}{dc^{1-2/d}} + O(m^{-2/d}).\]
\end{proof}

With this in mind, we may now proceed to prove Lemma
\ref{Davenport-Erdos A lemma}.

\begin{proof}[Proof of Lemma \ref{Davenport-Erdos A lemma}]
  The idea behind this proof is the same as that used to establish Lemma
  \ref{A estimate lemma}.

  Let $ f(n) = c_dn^d + c_{d-1}n^{d-1} + \dots + c_1n + c_0$ be a
  polynomial with real coefficients such that for $n \in \Nat$ we have
  $f(n) \in \Nat$. Define a sequence by
  $x_n = 0.f(n)f(n+1)f(n+2)f(n+3)\dots$. By Lemma \ref{order lemma} we
  have $f^{-1}(n) = c_d^{-1/d}n^{1/d} + O(1)$. Thus, since $f(n)$ is increasing, we observe that
  for $J \in \Nat$ we have
  \begin{align*}
    A([0.1,0.2)& ; f^{-1}(2 \times 10^J); (x_n)_n) \\[1ex]
               &= (f^{-1}(2 \times 10^J) - f^{-1}(10^J)) + (f^{-1}(2 \times 10^{J-1}) - f^{-1}(10^{J-1})) + \dots \\[1ex]
               &\qquad+ (f^{-1}(20) - f^{-1}(10)) + O(J) \\[1ex]
               &= \sum_{i=1}^{J}(f^{-1}(2 \times 10^i) - f^{-1}(10^i)) + O(J) \\[1ex]
               &= \sum_{i=1}^{J}\left(\left(\frac{2 \times 10^i}{c_d}\right)^{1/d} + O(1) - \left(\frac{10^i}{c_d}\right)^{1/d} - O(1)\right) + O(J) \\[1ex]
               &= \frac{1}{c_d^{1/d}} \sum_{i=1}^{J}((2 \times 10^i)^{1/d} - (10^i)^{1/d}) + O(J) \\[1ex]
               &= \frac{(2^{1/d}-1)}{c_d^{1/d}} \sum_{i=1}^{J}{(10^{i/d})} + O(J),
  \end{align*}
  as required.
\end{proof}

The proof of Theorem \ref{Davenport-Erdos theorem} follows from Lemma
\ref{Davenport-Erdos A lemma} essentially as Theorem \ref{general kn
  theorem} follows from Lemma \ref{A estimate lemma} as we shall now
see.

\begin{proof}[Proof of Theorem \ref{Davenport-Erdos theorem}]
  In a similar fashion to the proof of Theorem \ref{general kn
    theorem} we will show that
  \[
    \limsup{J \to \infty}{\frac{A([0.1,0.2); f^{-1}(2 \times 10^J);
        (x_n)_n)}{f^{-1}(2 \times 10^J)}} > \frac{1}{9}.
  \]
  This would suffice to show that the sequence $(x_n)_n$ is not u.d.
  mod 1 over $[0.1,1)$ since it would show that
  \[
    \limsup{N \to \infty}{\frac{A([0.1,0.2); N; (x_n)_n)}{N}} >
    \frac{1}{9}.
  \]
  Now, by using Lemma \ref{Davenport-Erdos A lemma} and the formula for an infinite geometric series, for $J \in \Nat$ we
  have
  \begin{align*}
    \limsup{J \to \infty}{\frac{A([0.1,0.2); f^{-1}(2 \times 10^J); (x_n)_n)}{f^{-1}(2 \times 10^J)}} &= \limsup{J \to \infty}{\frac{\left(\frac{2^{1/d}-1}{c_d^{1/d}}\right) \sum_{i=1}^{J}{(10^{i/d})}+ O(1)}{\left(\frac{2 \times 10^J}{c_d}\right)^{1/d} + O(1)}} \\[1ex]
                                                                                                     &\geq
                                                                                                       \limsup{J
                                                                                                       \to
                                                                                                       \infty}{\frac{\left(\frac{2^{1/d}-1}{c_d^{1/d}}\right)
                                                                                                       \sum_{i=1}^{J}{(10^{i/d}})+
                                                                                                       O(1)}{2
                                                                                                       \times
                                                                                                       \left(\frac{2
                                                                                                       \times
                                                                                                       10^J}{c_d}\right)^{1/d}}}
    \\[1ex]
    &= \lim_{J \to \infty}{\frac{\left(\frac{2^{1/d}-1}{c_d^{1/d}}\right) \sum_{i=1}^{J}{(10^{i/d}})}{2 \times \left(\frac{2 \times 10^J}{c_d}\right)^{1/d}}} \\[1ex]
    &= \frac{(2^{1/d}-1)}{2 \times 2^{1/d}} \lim_{J \to \infty}{\sum_{i=0}^{J-1}{10^{\frac{-i}{d}}}} \\[1ex]
    &= \frac{5^{1/d}(2^{1/d}-1)}{2(10^{1/d}-1)}.
  \end{align*}
  Next, we will show that for any $n \in \Nat$ we have
  \[
    \frac{5^{1/n}(2^{1/n}-1)}{2(10^{1/n}-1)} \geq
    \frac{\log{2}}{\log{10}} > \frac{1}{9}.
  \]
  To save on notation, let us define
  $y_n = \frac{5^{1/n}(2^{1/n}-1)}{2(10^{1/n}-1)}$. We observe that
  $(y_n)_n$ is a monotonically decreasing sequence. Furthermore, by
  considering $\frac{5^{x}(2^{x}-1)}{2(10^{x}-1)}$ one may use
  l'H\^{o}pital's rule to show that
  $\lim_{x \to 0}{\frac{5^{x}(2^{x}-1)}{2(10^{x}-1)}} =
  \frac{\log{2}}{2\log{10}}$
  which, in turn, shows that
  $\lim_{n \to \infty}{y_n} = \frac{\log{2}}{2\log{10}}$.

  Since $(y_n)_n$ is a monotonically decreasing sequence it follows
  that $y_n \geq \frac{\log{2}}{2\log{10}}$ for all $n \in \Nat$. Thus,
  for any $d \in \Nat$,
  \[ \limsup{J \to \infty} \frac{A([0.1,0.2); f^{-1}(2 \times 10^J);
      (x_n)_n)}{f^{-1}(2 \times 10^J)} \geq
    \frac{5^{1/d}(2^{1/d}-1)}{2(10^{1/d}-1)} \geq \frac{\log 2 }{2\log
      10} > \frac{1}{9},\] as claimed.

  The proof of Theorem \ref{Davenport-Erdos theorem} is thus complete.
\end{proof}

\subsection{A comment on Benford's Law}
One may be thinking at this point that perhaps these results are a
consequence of Benford's Law \cite{Raimi1976} - after all, considering leading digits
(specifically the abundance of ones as a leading digit) is the crux of
the proofs of Theorems \ref{Champernowne number theorem},
\ref{general kn theorem} and \ref{Davenport-Erdos theorem}. We
shall conclude by taking a moment to discuss this.

For sequences, instead of Benford's Law itself, one considers the
notion of (strong) Benford sequences. Rather conveniently for us,
Cigler proposed a characterisation of Benford sequences in terms of uniform
distribution modulo 1 given below (see, for example,
\cite{Diaconisref}). For further discussion of this topic we refer the
readers to, for example, \cite{BNS2009}, \cite{Diaconisref} and
\cite{Raimi1976}.

\begin{prop}
  The sequence $(a_i)_i$ is a strong Benford sequence if and only if
  $(\log_{10}{a_i})_i$ is uniformly distributed modulo 1.
\end{prop}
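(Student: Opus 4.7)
The plan is to observe that the two conditions in the proposition are related by an explicit change of variables, and to reduce one to the other by applying each defining condition on an appropriate family of intervals. First I would recall the definition of a strong Benford sequence: $(a_i)_i$ is strong Benford if, for every $t \in [1,10)$,
\begin{equation*}
\lim_{N \to \infty} \frac{\#\{i \leq N : S(a_i) < t\}}{N} = \log_{10} t,
\end{equation*}
where $S(a_i) := a_i \cdot 10^{-\lfloor \log_{10} a_i \rfloor} \in [1,10)$ denotes the significand of $a_i$.

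The central observation, which I would state and verify first, is the pointwise identity $S(a_i) = 10^{\{\log_{10} a_i\}}$. This immediately gives the crucial equivalence
\begin{equation*}
S(a_i) < t \;\iff\; \{\log_{10} a_i\} < \log_{10} t,
\end{equation*}
valid for every $t \in [1,10)$. With this in hand, the argument splits cleanly into two directions. For the $(\Leftarrow)$ implication, assume $(\log_{10} a_i)_i$ is u.d. mod $1$ and apply the definition to the subinterval $[0, \log_{10} t) \subseteq [0,1)$; the resulting limit is $\log_{10} t$, which is precisely the strong Benford property. For the $(\Rightarrow)$ implication, assume $(a_i)_i$ is strong Benford and, for each $s \in [0,1)$, set $t = 10^s \in [1,10)$; the hypothesis then yields that the proportion of indices with $\{\log_{10} a_i\} < s$ tends to $s$. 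Subtracting the statement for $s=a$ from that for $s=b$ for any $0 \leq a < b \leq 1$ produces the uniform distribution condition over $[a,b)$.

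I do not anticipate any genuine obstacle, as the whole argument is a substitution. The only point requiring a little care is to justify that knowing the limiting proportion of indices in every half-open interval of the form $[0,s)$ suffices to deduce u.d. mod $1$, which is standard since the general interval $[a,b)$ is handled by additivity, and the endpoints contribute negligibly as $a_i$ with $\{\log_{10} a_i\}$ exactly equal to a prescribed value can be absorbed into a set of zero density in the limit.
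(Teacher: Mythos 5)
Your argument is correct and is the standard proof of this equivalence: the identity $S(a_i) = 10^{\{\log_{10} a_i\}}$ turns the significand condition into the uniform distribution condition verbatim, and passing from intervals $[0,s)$ to general $[a,b)$ by subtraction is exact for half-open intervals (your closing worry about endpoints is unnecessary). Note that the paper itself offers no proof of this proposition -- it is stated as Cigler's characterisation with a pointer to the literature -- so there is no authorial argument to compare against; your write-up supplies the routine verification the paper omits, modulo the fact that you had to choose a definition of ``strong Benford sequence'' (the significand formulation), which is the standard one and the one under which the statement is essentially a change of variables.
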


It follows immediately from this characterisation that the sequence of natural
numbers is not a strong Benford sequence as $(\log_{10}{n})_n$ is not
u.d. mod $1$ (one possible way to show this is by modifying the argument of Example 2.4 in \cite[Chapter 1]{KUIPERS1974}). Also $ \log_b(f(n)) $ is not u.d.mod 1 for all bases $b$
and polynomials $f:\mathbb N \rightarrow \mathbb N$. To see this first
note that we can represent any polynomial as
$f(n) = (n - \alpha_1)(n- \alpha_2) \dots (n - \alpha_d)$ where $d$ is
the degree of the polynomial and the $\alpha_i$ are roots. Hence
$\log_b(f(n)) = \sum_{i = 1}^{d} \log_b(n-\alpha_i) \sim d \log_b(n)$
and so is not u.d. mod 1.


\vspace{0.5cm}
\textit{Acknowledgements.} \normalfont The authors would like to thank Christopher Hughes, Victor Beresnevich and Sanju Velani for providing useful comments on the manuscript. They are especially grateful to Chris for the many improvements to earlier drafts he has suggested.

DA and SB are supported by EPSRC Doctoral Training Grants EP/M506680/1 and EP/L505122/1, respectively.

\bibliographystyle{plain} \bibliography{}
\end{document}